\newtheorem{theorem}{Theorem}[section]
\newtheorem{lemma}[theorem]{Lemma}
\newtheorem{proposition}[theorem]{Proposition}
\theoremstyle{definition}
\newtheorem{definition}[theorem]{Definition}
\newtheorem{example}[theorem]{Example}
\newtheorem{remark}[theorem]{Remark}
\newtheorem{question}[theorem]{Question}
\DeclareMathOperator{\stmuc}{\xrightarrow[]{st_\mu}}
\DeclareMathOperator{\stc}{\xrightarrow[]{\mu-st_\tau}}
\begin{document}

\title{Statistical convergence of nets on locally solid Riesz spaces}\maketitle\author{\centering{Fatih Tem\i zsu$^{1}$, {Abdullah Ayd{\i}n$^{2,*}$\\ \small $^1$Department of Mathematics, Bingöl University, Bingöl, Turkey\\ ftemizsu@bingol.edu.tr\\ \small $^2$Department of Mathematics, Mu\c{s} Alparslan University, Mu\c{s}, Turkey\\ a.aydin@alparslan.edu.tr\\ $*$Corresponding Author}
		
\abstract{The statistical convergence is handled for sequences with the natural density, in general. In a recent paper, the statistical convergence for nets in Riesz spaces has been studied and investigated by developing topology-free techniques in Riesz spaces. In this paper, we introduce the statistically topological convergence for nets on locally solid Riesz spaces with solid topologies. Moreover, we introduce the statistical continuity on locally solid Riesz spaces.}\\
\vspace{2mm}

{\bf Keywords:} Statistical convergence of nets, finitely additive measure, Riesz spaces, statistical continuous operator
\vspace{2mm}

{\bf 2010 AMS Mathematics Subject Classification:} {\normalsize 40A35, 46A40, 40A05, 46B42}
\section{Introduction and preliminaries}

Riesz space and statistical convergence are the natural and efficient tools in the theory of functional analysis. Riesz space was introduced by F. Riesz in \cite{Riez} and the idea of statistical convergence was firstly introduced by Zygmund \cite{Zygmund}, after then, Fast \cite{Fast} and Steinhaus \cite{St} independently improved the idea of statistical convergence. Riesz space is an ordered vector space having many applications in measure theory, operator theory, and applications in economics (cf. \cite{AB,ABPO,Za}). On the other hand, statistical convergence is a generalization of the ordinary convergence of a real sequence. Several applications and generalizations of the statistical convergence of sequences have been investigated by several authors (cf. \cite{Aydn1,Aydn2,Fast,Fridy,M,St,SP}). In general, the statistical convergence of sequences is considered with the natural density of sets on the natural numbers $\mathbb{N}$. However, Connor introduced the notion of statistical convergence for sequences with finitely additive set function \cite{Con,Con2}. After then, some similar works have been done (cf. \cite{CLL,DO,Mor}). The study related to this paper is done by Ayd\i n and Temizsu in \cite{AT}, where the statistical convergence was introduced for nets. In this work, we
introduce the concept of statistical convergence for nets and statistical continuous operators on locally solid Riesz space with solid topologies. 

First, let us remember some notations and terminologies used in this paper. A binary relation \textquotedblleft$\leq$\textquotedblright \ on a set $A$ is called a {\em preorder} if it is reflexive and transitive. A non-empty set $A$ with a preorder binary relation "$\leq$" is said to be a \textit{directed upwards} (\textit{directed set}, shortly) if for each pair $x,y\in A$ there exist $z\in A$ such that $x\leq z$ and $y\leq z$. Unless otherwise stated, we consider all directed sets as infinite in this paper. For given elements $a$ and $b$ in a preordered set $A$ such that $a\leq b$, the set $\{x\in A:a\leq x\leq b\}$ is called an \textit{order interval} in $A$. A subset $I$ of $A$ is called an \textit{order bounded set} whenever $I$ is contained in an order interval. A function whose domain is a directed set is said to be a \textit{net}. A net is briefly abbreviated as $(x_\alpha)_{\alpha\in A}$ with its directed domain set $A$. 

We remind that a map from a field $\mathcal{M}$ to $[0,\infty]$ is called {\em finitely additive measure} whenever $\mu(\emptyset)=0$ and $\mu(\cup_{i=1}^{n}E_i)=\sum_{i=1}^{n}\mu(E_i)$ for all finite disjoint sets $\{E_i\}_{i=1}^n$ in $\mathcal{M}$ (cf. \cite[p.25]{Folland}). We take the following definitions from \cite{AT}.
\begin{definition}
Let $A$ be a directed set and $\mathcal{M}$ be a subfield of $\mathcal{P}(A)$. 
\begin{enumerate}
\item[(1)] \ An order interval $[a,b]$ of $A$ is said to be a {\em finite order interval} if it is a finite subset of $A$.
\item[(2)] \ $\mathcal{M}$ is called an {\em interval field} on $A$ whenever it includes all finite order intervals of $A$.
\item[(3)] \ A finitely additive measure $\mu:\mathcal{M}\to[0,1]$ is said to be a {\em directed set measure} if $\mathcal{M}$ is an interval field and $\mu$ satisfies the following facts: $\mu(I)=0$ for every finite order interval $I\in \mathcal{M}$; $\mu(A)=1$; $\mu(C)=0$ whenever $C\subseteq B$ and $\mu(B)=0$ holds for $B,C\in \mathcal{M}$.
\end{enumerate} 
\end{definition} 

Following from \cite[Rem.2.5]{AT} that the directed set measure is an extension of the natural density. In this paper, we consider all nets with a directed set measure on the interval fields of the power set of  the index sets. Moreover, to simplify the presentation, a directed set measure on an interval field $\mathcal{M}$ of directed set $A$ will be expressed briefly as a measure on the directed set $A$.

Recall that a real vector space $E$ with an order relation \textquotedblleft$\leq$\textquotedblright \ is called an {\em ordered vector space} if, for each $x,y\in E$ with $x\leq y$, $x+z\leq y+z$ and $\alpha x\leq\alpha y$ hold for all $z\in E$ and $\alpha \in \mathbb{R}_+$. An ordered vector space $E$ is called a {\em Riesz space} or {\em a vector lattice} if, for any two vectors $x,y\in E$, the infimum and the supremum
$$
x\wedge y=\inf\{x,y\} \ \ \text{and} \ \ x\vee y=\sup\{x,y\}
$$
exist in $E$, respectively.	A subset $I$ of a Riesz space $E$ is said to be a \textit{solid} set if, for each $x\in E$ and $y\in I$ with $|x|\leq|y|$, it follows that $x\in I$. A solid vector subspace is called an \textit{order ideal}. An order closed ideal is called a \textit{band}. Also, a band $B$ is called a \textit{projection band} whenever it satisfies $E=B\oplus B^d$, where $B^d$ is the disjoint complement set of $B$. A Riesz space $E$ has the \textit{Archimedean} property provided that $\frac{1}{n}x\downarrow0$ holds in $E$ for each $x\in E_+$. In this paper, unless otherwise stated, all Riesz spaces are assumed to be real and Archimedean. We continue with the crucial notion of Riesz spaces (cf. \cite{AB,ABPO,LZ,Za}).

\begin{definition}
A net $(x_\alpha)_{\alpha\in A}$ in a Riesz space $E$ is called {\em order convergent} to $x\in E$ if there exists another net $(y_\alpha)_{\alpha\in A}\downarrow 0$ (i.e., $\inf y_\alpha=0$ and $y_\alpha\downarrow$) such that $|x_\alpha-x|\le y_\alpha$ holds for all $\alpha\in A$.
\end{definition}

We refer the reader for some different types of the order convergence and some relations among them to \cite{AS}. 

We remind that a {\em linear topology} $\tau$ on a vector space $E$ means that it is a topology on $E$ which makes the addition and the scalar multiplication continuous. For each topological vector space, it is well known that there is a base $\mathcal{N}$ consisting of zero neighborhoods holding the following properties (cf. \cite{AB,AlTo}): 
\begin{enumerate}
\item[(a)] \ Every $U\in \mathcal{N}$ is a \textit{balanced} set, i.e., $\lambda U\subseteq U$ for all $\lvert\lambda\rvert\leq 1$;
\item[(b)] \ Each $U\in \mathcal{N}$ is an \textit{absorbing set}, i.e., for every $u\in U$, there exists $\lambda>0$ such that $\lambda u\in U$;
\item[(c)] \ For every $U\in \mathcal{N}$, there is another $V\in \mathcal{N}$ with $V+V\subseteq U$;
\item[(d)] \ For any $U_1,U_2\in \mathcal{N}$, there is $U\in \mathcal{N}$ such that $U\subseteq U_1\cap U_2$; 
\item[(e)] \ For every $U\in\mathcal{N}$ and every scalar $\lambda$, the set $\lambda U$ is in $\mathcal{N}$.
\end{enumerate}
Whenever we mention a basic zero neighborhood, we always assume that it belongs to a base which satisfies the properties $(a)$-$(e)$.
\begin{definition}
Let $\tau$ be a linear topology on a Riesz space $E$. Then $(E,\tau)$ is said to be a {\em locally solid Riesz space} or {\em locally solid vector lattice} whenever $\tau$ has a base at zero which consists of solid sets.
\end{definition}
We denote $\mathcal{N}_{sol}$ as the solid base of a locally solid Riesz space.


\section{Statistically topological convergence}

Let $K$ be a subset of natural numbers and define a new set $K_n=\{k\in K:k\leq n\}$. Then we denote $\lvert K_n\rvert$ for the cardinality of that the set $K_n$. If the limit $\delta(K):=\lim\limits_{n\to\infty}\lvert K_n\rvert/n$ exists then $\delta(K)$ is called the \textit{natural density} or \textit{asymptotic density} of the set $K$. On the other hand, let $X$ be a topological space and $(x_n)$ be a sequence in $X$. Then $(x_n)$ is said to be statistically convergent to $x\in X$ whenever, for each neighborhood $U$ of $x$, we have $\delta\big(\{n\in\mathbb{N}:x_n\notin U\}\big)=0$ (cf. \cite{M,MK}). Motivated by the above definitions, we give the following notion which is crucial for the present paper.
\begin{definition}
Let $(x_\alpha)_{\alpha\in A}$ be a net in a locally solid Riesz space $(E,\tau)$. Then $(x_\alpha)_{\alpha\in A}$ is called {\em $\mu$-statistically topological convergent} to $x\in E$ if, for every zero $\tau$-neighborhood $U$, there exists an index $\alpha_u$ related $U$ such that 
$$
\mu\big(\{\alpha_u \leq \alpha\in A:(x_\alpha-x)\notin U\}\big)=0.
$$
We abbreviate this convergence as $x_\alpha\stc x$. Then we shortly say that $(x_\alpha)_{\alpha\in A}$ is $\mu$-statistically $\tau$-convergent to $x$. 
\end{definition}

Briefly, $x_\alpha\stc x$ if $\mu\big(B_{\{\alpha_u,U\}}\{x_\alpha,x\}\big)=0$ for each zero $\tau$-neighborhood $U$ and for some indexes $\alpha_u\in A$, where
$$
B_{\{\alpha_u,U\}}=\{\alpha_u\leq\alpha\in A:(x_\alpha-x)\notin U\}.
$$
We denote $E_{\mu-st_\tau}$ as the set of all $\mu$-statistically $\tau$-convergent nets in a locally solid Riesz space $(E,\tau)$. Also, we observe the following useful and important fact.
\begin{remark}\label{main remark}
Consider a net $(x_\alpha)_{\alpha\in A}$ with $x_\alpha\stc x$ in a locally solid Riesz space $(E,\tau)$. Then, for a fixed zero $\tau$-neighborhood $U$, there exists an index $\alpha_u \in A$ such that $\mu\big(B_{\{\alpha_u,U\}}\{x_\alpha,x\}\big)=0$, where
\begin{eqnarray*}
B_{\{\alpha_u,U\}}&=&\{\alpha_u\leq\alpha\in A:(x_\alpha-x)\notin U\}\\&=& \{\alpha\in A:\alpha_u\leq\alpha\}\cap \{\alpha\in A:(x_\alpha-x)\notin U\}.
\end{eqnarray*}
Thus, the complement set of $B_{\{\alpha_u,U\}}$ is
$$
B^c_{\{\alpha_u,U\}}=\{\alpha\in A:\alpha_u\nleq\alpha\}\cup \{\alpha\in A:(x_\alpha-x)\in U\}.
$$
So, it follows that $\mu(B^c)=1$, and so, one can obtain that 
$$
\mu\big(\{\alpha\in A:(x_\alpha-x)\in U\}\big)=1
$$
does need not hold in general. Therefore, generally, we have $\mu\big(\{\alpha\in A:\alpha_u\nleq\alpha\}\big)\neq 0$.
\end{remark}

We can introduce the following notion: a net $x_\alpha\stc x$ is said to be {\em straight $\mu$-statistically topological convergent} to $x\in E$ if $\mu\big(\{\alpha\in A:(x_\alpha-x)\in U\}\big)=1$, where $\alpha_u\in A$ is the index of $\mu$-$st_\tau$-convergence. But, we do not dwell on this definition in this study. Hence, unless otherwise stated, we assume that the measure of sets $\{\alpha\in A:\alpha_u\nleq\alpha\}$ and $\{\alpha\in A:(x_\alpha-x)\in U\}$ in Remark \ref{main remark} are different from zero for all $\mu$-statistically $\tau$-convergent nets $(x_\alpha)_{\alpha\in A}$ and for each zero $\tau$-neighborhood $U$. Following from the solidness property of zero $\tau$-neighborhoods, we give the following observation.
\begin{lemma}
It is clear that $x_\alpha\stc x$ if and only if $|x_\alpha-x|\stc 0$ in locally solid Riesz spaces.
\end{lemma}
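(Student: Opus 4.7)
The plan is to exploit the fact that $(E,\tau)$ has a base $\mathcal{N}_{sol}$ of solid zero-neighborhoods, so every zero $\tau$-neighborhood $U$ contains some solid $V$, and to use the elementary observation that for a solid set $V$ and any $y\in E$ we have $y\in V \iff |y|\in V$ (one direction since $|y|\le |y|$ and solidness applied to $y\in V$, the other since $|y|\le ||y||$ and solidness applied to $|y|\in V$). Once this equivalence is in hand, the two directions become completely symmetric.

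For the forward implication, I would start with an arbitrary zero $\tau$-neighborhood $U$ and choose $V\in\mathcal{N}_{sol}$ with $V\subseteq U$. The hypothesis $x_\alpha\stc x$ applied to $V$ produces an index $\alpha_v$ with $\mu\bigl(\{\alpha_v\leq\alpha\in A:(x_\alpha-x)\notin V\}\bigr)=0$. Setting $\alpha_u:=\alpha_v$, I would argue that
\[
\{\alpha_u\leq\alpha\in A:|x_\alpha-x|\notin U\}\;\subseteq\;\{\alpha_v\leq\alpha\in A:(x_\alpha-x)\notin V\},
\]
because $|x_\alpha-x|\notin U$ forces $|x_\alpha-x|\notin V$ (as $V\subseteq U$), which by the solid-set equivalence is the same as $(x_\alpha-x)\notin V$. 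The monotonicity clause in the definition of a directed set measure (if $C\subseteq B$ and $\mu(B)=0$ then $\mu(C)=0$) then yields $\mu\bigl(\{\alpha_u\leq\alpha\in A:|x_\alpha-x|\notin U\}\bigr)=0$, which is exactly $|x_\alpha-x|\stc 0$.

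For the reverse implication, I would reverse the roles: given a zero $\tau$-neighborhood $U$, pick solid $V\in\mathcal{N}_{sol}$ with $V\subseteq U$, use $|x_\alpha-x|\stc 0$ to obtain $\alpha_v$ with $\mu\bigl(\{\alpha_v\leq\alpha\in A:|x_\alpha-x|\notin V\}\bigr)=0$, and verify the analogous inclusion
\[
\{\alpha_v\leq\alpha\in A:(x_\alpha-x)\notin U\}\;\subseteq\;\{\alpha_v\leq\alpha\in A:|x_\alpha-x|\notin V\},
\]
again via $V\subseteq U$ and the solid-set equivalence. The same monotonicity argument finishes the proof.

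There is no real obstacle here; the only point worth being careful about is the choice of the index. Because the definition of $\mu$-statistical $\tau$-convergence is allowed to furnish a different starting index for each neighborhood, it is legitimate to recycle $\alpha_v$ as the required $\alpha_u$ for $U$, so no cofinality or compatibility argument about the indices is needed. The proof is essentially a one-line observation once solidness of the base and monotonicity of $\mu$ are in hand.
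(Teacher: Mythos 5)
Your proof is correct and is exactly the argument the paper has in mind: the lemma is stated without proof, attributed to ``the solidness property of zero $\tau$-neighborhoods,'' and your key observation that $y\in V\iff|y|\in V$ for solid $V$, combined with passing to a solid $V\subseteq U$ and the monotonicity of $\mu$ on null sets, is precisely the intended justification. The care you take with recycling the index $\alpha_v$ as $\alpha_u$ is appropriate and consistent with the paper's definition.
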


The following example which is similar to \cite[Exam.1.3]{Aydn1} yields two sequences, one is $\mu$-statistically $\tau$-convergent and the other is not on the same space.

\begin{example}
Let consider the Riesz space $E:=c_0$ the set of all real null sequences. Thus, $E$ is a Banach lattice with the supremum norm $\lVert\cdot\rVert_\infty$. It follows from \cite[Thm.2.28]{AB} that $(E,\lVert\cdot\rVert_\infty)$ is also a locally solid Riesz space. It can be seen that the base of solid topology generated by the supremum norm consists of zero neighborhoods 
$$
W_\varepsilon=\{x\in E:\lVert x\rVert_\infty<\varepsilon\},
$$
where $\varepsilon$ is an arbitrary positive real number. Now, consider the sequence $(e_n)$ of the standard unit vectors in $E$. Then $e_n\not\xrightarrow{\mu-st_\tau}0$ in $E$. Indeed, take an arbitrary zero $\tau$-neighborhood $U$. Then there exists some $W_\varepsilon\in \mathcal{N}_{sol}$ for some $\varepsilon>0$ such that $W_\varepsilon\subseteq U$. Thus, it follows that
$$
B=\{n\in \mathbb{N}:e_n\notin W_\varepsilon\}=\{n\in \mathbb{N}:\lVert e_n\rVert_\infty\geq \varepsilon\}.
$$
It can be seen that $\mu(B)\neq 0$ because $\lVert e_n\rVert_\infty=1>\varepsilon$ holds for all $n\in\mathbb{N}$. Therefore, we obtain the desired result.

Now, take another sequence $(x_n)$ in $E$ which is denoted by 
$$
x_n:=(0,0,0,\cdots,0,\frac{1}{n},0,\cdots)
$$
for each $n\in\mathbb{N}$. Thus, we have $e_n\stc0$. To see this, consider an arbitrary zero $\tau$-neighborhood $U$ with a solid zero $\tau$-neighborhood $W_\varepsilon\subseteq U$. Then there exists a natural number $n_0$ such that $1/{n_0}\leq \varepsilon$. So, it follows that $W_{\frac{1}{n_0}}\subseteq W_\varepsilon$. Also, we have
$$
C=\{n\in \mathbb{N}:x_n\notin W_\frac{1}{n_0}\}=\{n\in \mathbb{N}:\lVert x_n\rVert_\infty=\frac{1}{n}>\frac{1}{n_0}\}.
$$
Thus, we obtain $\mu(C)=0$. Since $\{n\in \mathbb{N}:x_n\notin U\}\subseteq\{n\in \mathbb{N}:x_n\notin W_\varepsilon\}\subseteq C$, we have $\mu\big(\{n\in \mathbb{N}:x_n\notin U\}\big)=0$. Therefore, we obtain the desired result, $x_n\stc0$.
\end{example}


\section{Results of $\mu$-statistically topological convergence}
Recall that a net $(x_\alpha)_{\alpha\in A}$ topological converges to a point $x$ in a topological space $X$ if, for every neighborhood $U$ of $x$, there is an index $\alpha_0\in A$ such that $x_\alpha \in U$ for all $\alpha\geq\alpha_0$. 
\begin{remark}\label{top con imp mu}
The topological convergence implies the $\mu$-statistically topological convergence. Indeed, suppose that $x_\alpha\xrightarrow{\tau}x$ in a locally solid Riesz space $(E,\tau)$. Then, for arbitrary zero $\tau$-neighborhood $U$, we have an index $\alpha_u$ such that $(x_\alpha-x)\in U$ for all $\alpha\geq\alpha_u$. Thus, we get $\mu\big(\{\alpha_u\leq\alpha\in A: (x_\alpha-x)\not\in U\}\big)=0$, i.e., $x_\alpha\stc x$.
\end{remark}

The converse of Remark \ref{top con imp mu} does not need to be true. We continue with the following several basic and useful results which are similar to the classical ones for so many kinds of statistical convergences.
\begin{theorem}\label{basic remarkasic properties of st convergence}
Let $x_\alpha\stc x$ and $y_\alpha\stc y$ in a locally solid Riesz space $(E,\tau)$. Then we have the following statements:
\begin{enumerate}
\item[(i)] \ if $\tau$ is Hausdroff, $x_\alpha\stc x$ and $x_\alpha\stc z$ then $x=z$;
\item[(ii)] \ $x_\alpha+y_\alpha\stc x+y$;
\item[(iii)] \ $\lambda x_\alpha\stc\lambda x$ for any $\lambda\in{\mathbb R}$;
\item[(iv)] \ $x_\alpha\stc x$ if and only if  $(x_n-x)\stc0$.		
\end{enumerate}	
\end{theorem}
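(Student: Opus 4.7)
The plan is to verify the four items in sequence, using standard zero-neighborhood calculus on locally solid topologies (balancedness, the trick $V+V\subseteq U$, base property (e)) together with finite additivity and monotonicity of $\mu$.

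For (i) I would argue by contradiction. Suppose $x\neq z$. Hausdorffness combined with the solid base lets me pick a balanced solid zero $\tau$-neighborhood $V$ with $x-z\notin V+V$; since $V$ is balanced, $V+V=V-V$. From $x_\alpha\stc x$ and $x_\alpha\stc z$ one extracts indices $\alpha_1,\alpha_2$ such that $\mu\bigl(\{\alpha_1\leq\alpha:(x_\alpha-x)\notin V\}\bigr)=0$ and $\mu\bigl(\{\alpha_2\leq\alpha:(x_\alpha-z)\notin V\}\bigr)=0$, and directedness supplies $\alpha_0\geq\alpha_1,\alpha_2$. If at some $\alpha\geq\alpha_0$ we had both $(x_\alpha-x)\in V$ and $(x_\alpha-z)\in V$, then $x-z=(x_\alpha-z)-(x_\alpha-x)\in V-V=V+V$, contradicting the choice of $V$. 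Hence the tail $\{\alpha\geq\alpha_0\}$ is covered by the two $\mu$-null bad sets and therefore has $\mu$-measure zero; combined with the standing convention stated immediately after Remark \ref{main remark} that such tails carry positive $\mu$-measure, this yields the required contradiction.

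For (ii), I would fix a zero $\tau$-neighborhood $U$, choose $V\in\mathcal{N}_{sol}$ with $V+V\subseteq U$, and feed $V$ into the two $\stc$-convergences to obtain indices $\alpha_V,\beta_V$ making the corresponding bad sets $\mu$-null. Taking $\gamma_V\geq\alpha_V,\beta_V$ and observing that $(x_\alpha-x)\in V$ together with $(y_\alpha-y)\in V$ forces $(x_\alpha+y_\alpha)-(x+y)\in V+V\subseteq U$, I get
\[
\{\gamma_V\leq\alpha:(x_\alpha+y_\alpha)-(x+y)\notin U\}\subseteq\{\gamma_V\leq\alpha:(x_\alpha-x)\notin V\}\cup\{\gamma_V\leq\alpha:(y_\alpha-y)\notin V\},
\]
and monotonicity with finite additivity of $\mu$ collapses the left-hand measure to zero. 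For (iii), the case $\lambda=0$ is trivial, and for $\lambda\neq 0$ property (e) of the base makes $\lambda^{-1}U$ a zero $\tau$-neighborhood; applying $\stc$-convergence to $\lambda^{-1}U$ and rewriting $(x_\alpha-x)\notin\lambda^{-1}U$ as $(\lambda x_\alpha-\lambda x)\notin U$ transports the measure-zero statement to the scaled net. Item (iv) is tautological, since substituting $0$ for the limit turns the bad-index set $\{\alpha_u\leq\alpha:(x_\alpha-x)\notin U\}$ into the bad-index set for $(x_\alpha-x)\stc 0$ without changing a single element.

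The main obstacle is (i). The definition of $\stc$ only constrains the measure of bad indices inside a tail and says nothing intrinsic about the size of that tail, as Remark \ref{main remark} explicitly flags. The whole weight of Hausdorff uniqueness therefore rests on the standing assumption stated just after that remark, that tails of the form $\{\alpha_u\leq\alpha\}$ carry positive $\mu$-measure; without invoking it, the conclusion that the two bad sets jointly cover the tail is vacuous and no contradiction emerges. The remaining three items are then routine $\varepsilon$-chasing once the usual locally solid bookkeeping is in place.
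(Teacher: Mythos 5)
Your proposal is correct and, for items (i), (ii) and (iv), follows essentially the same route as the paper: the $V+V\subseteq U$ trick, directedness to merge the two indices, and the union bound on the two $\mu$-null bad sets (your explicit containment in (ii) is in fact cleaner than the paper's ``without loss of generality'' phrasing, which really is just the same union bound). The one genuine divergence is (iii): the paper splits into the cases $\lvert\lambda\rvert<1$, handled by balancedness of $W$, and $\lvert\lambda\rvert>1$, handled by choosing $N\in\mathcal{N}_{sol}$ with an $m$-fold sum $N+\cdots+N\subseteq W$ for $m\geq\lvert\lambda\rvert$ and using solidity of $W$ together with $\lvert\lambda x_\alpha-\lambda x\rvert\leq m\lvert x_\alpha-x\rvert$; you instead invoke base property (e) to pass to $\lambda^{-1}W$ for a basic $W\subseteq U$ and simply transport the null set, which is shorter, avoids the case split, and does not even use solidity. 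Both are valid; the paper's argument is the one you would need if the base were only assumed balanced and absorbing without property (e). One small caution on (i): the standing convention after Remark \ref{main remark} literally asserts that $\mu\bigl(\{\alpha\in A:\alpha_u\nleq\alpha\}\bigr)\neq 0$ and $\mu\bigl(\{\alpha\in A:(x_\alpha-x)\in U\}\bigr)\neq 0$, not that the tail $\{\alpha:\alpha_u\leq\alpha\}$ itself has positive measure, so your appeal to ``tails carry positive measure'' is a slight overreading; but the substantive point you flag --- that some such non-nullity hypothesis is indispensable to produce a single $\alpha$ with both $(x_\alpha-x)\in V$ and $(x_\alpha-z)\in V$ --- is exactly the step the paper itself glosses over with ``Thus, $x_\alpha-x,x_\alpha-z\in V$ for some $\alpha\in A$'', so you are no worse off than the original and rather more candid about where the weight falls.
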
 

\begin{proof}
The $(iv)$ is straightforward, and so, we show the other statements. Let $U$ be an arbitrary zero $\tau$-neighborhood. Then there exists $W\in\mathcal{N}_{sol}$ such that $W \subseteq U$. Moreover, there is $V\in\mathcal{N}_{sol}$ so that $V+V\subseteq W$, and so, $V+V\subseteq U$.

$(i)$ It follows from $x_\alpha\stc x$ and $x_\alpha\stc z$ that there exist indexes $\alpha_1$ and $\alpha_2$ such that 
$$
\mu\big(B_{\{\alpha_1,V\}}\{x_\alpha,x\}\big)=\mu\big(B_{\{\alpha_2,V\}}\{x_\alpha,z\}\big)=0.
$$
Also, since the index set $A$ of the net $(x_\alpha)_{\alpha\in A}$ is directed, there exists $\alpha_0\in A$ such that $\alpha_1\leq\alpha_0$ and $\alpha_2\leq\alpha_0$. So, we have $\mu\big(\{\alpha_0\leq\alpha\in A:(x_\alpha-x)\notin V\}\big)=0$ and $\mu\big(\{\alpha_0\leq\alpha\in A:(x_\alpha-z)\notin V\}\big)=0$. Thus, $x_\alpha-x,x_\alpha-z\in V$ for some $\alpha\in A$. It follows that
$$
x-z=(x-x_\alpha)+(x_\alpha-z)\in V+V\subseteq U
$$
for some $\alpha\in A$. Hence, we get $(x-z)\in U$ for each zero $\tau$-neighborhood $U$. It is well known that the intersection of all zero $\tau$-neighborhood in Hausdorff space is the singleton zero. It means that $x=z$.

$(ii)$ There are some indexes $\alpha_1,\alpha_2\in A$ such that 
$$
\mu\big(B_{\{\alpha_1,V\}}\{x_\alpha,x\}\big)=\mu\big(B_{\{\alpha_2,V\}}\{y_\alpha,y\}\big)=0
$$
because of $x_\alpha\stc x$ and $y_\alpha\stc y$. Then there is $\alpha_0\in A$ such that $\alpha_1\leq\alpha_0$ and $\alpha_2\leq\alpha_0$. So, we have $\mu\big(B_{\{\alpha_0,V\}}\{x_\alpha,x\}\big)=\mu\big(B_{\{\alpha_0,V\}}\{y_\alpha,y\}\big)=0$. Hence, it follows from the equality
$$
(x_\alpha+y_\alpha)-(x+y)=(x_\alpha-x)+(y_\alpha+y)\in V+V\subseteq U
$$
that $(x_\alpha+y_\alpha)-(x+y)\notin U$ implies $(x_\alpha-x)\notin V$ or $(y_\alpha-y)\notin V$ for each $\alpha\in A$. Without loss of generality, assume that $(x_\alpha+y_\alpha)-(x+y)\notin U$ implies $(x_\alpha-x)\notin V$. Thus, we have
$$
\{\alpha\in A:(x_\alpha+y_\alpha)-(x+y)\notin U\}\subseteq \{\alpha\in A:x_\alpha-x\notin V\}.
$$
Therefore, we obtain $\mu\big(\{\alpha_0\leq\alpha\in A:(x_\alpha+y_\alpha)-(x+y)\notin U\}\big)=0$, i.e., $x_\alpha+y_\alpha\stc x+y$.

$(iii)$ Take a scalar $\lambda\in \mathbb{R}$ with $|\lambda|<1$. Then $(x_\alpha-x)\in W$ implies $\lambda(x_\alpha-x)=(\lambda x_\alpha-\lambda x)\in W$ because $W$ is a balanced set. Hence, one can see that
$$
\{\alpha_w\leq\alpha\in A:\lambda (x_\alpha-\lambda x)\notin W\}\subseteq B_{\{\alpha_w,W\}}.
$$
So, we obtain that $\mu\big(\{\alpha_w\leq\alpha\in A:(\lambda x_\alpha-\lambda x)\notin W\}\big)=0$. It follows $\mu\big(\{\alpha_w\leq\alpha\in A:(\lambda x_\alpha-\lambda x)\notin U\}\big)=0$ because of $W\subseteq U$. Therefore, we obtain that $\lambda x_\alpha\stc\lambda x$ for all $|\lambda|<1$. 

Next, choose a scalar $\lvert \lambda\rvert> 1$. For given $W$, another zero $\tau$-neighborhood solid set $N\in\mathcal{N}_{sol}$ can be found so that 
$$
\{N+N+\cdots+N\}_m\subseteq W
$$
holds for $m\in\mathbb{N}$, the smallest natural number greater or equal $|\lambda|$. It follows from $x_\alpha\stc x$ that $\mu\big(B_{\{\alpha_N,N\}}\{x_\alpha,x\}\big)=0$ for some $\alpha_N\in A$. We observe that
$$
|\lambda x_\alpha-\lambda x|=|\lambda||x_\alpha-x|\leq m|x_\alpha-x|\in N+\dots+N\subseteq W\subseteq U
$$
for some $\alpha\in A$. Then $(\lambda x_\alpha-\lambda x)\notin U$ implies $(x_\alpha-x)\notin N$ for $\alpha\in A$. By the same argument in the last part of proof $(ii)$, we have $\mu\big(\{\alpha_N\leq\alpha\in A:(\lambda x_\alpha-\lambda x)\notin U\}\big)=0$, i.e., $\lambda x_\alpha\xrightarrow{st-u_\tau}\lambda x$. 
\end{proof}

\begin{definition}\label{bounded def}
A net $(x_\alpha)_{\alpha\in A}$ in a locally solid Riesz space $(E,\tau)$ is called {\em $\mu$-statistically $\tau$-bounded} whenever there is some $\lambda>0$ scalar such that
$$
\mu\big(\{\alpha_u\leq\alpha\in A:\lambda x_\alpha\notin U\}\big)=0
$$
holds for each zero $\tau$-neighborhood $U$ with some index $\alpha_u\in A$.
\end{definition}

One can see that a topological bounded net is $\mu$-statistically $\tau$-bounded in locally solid Riesz spaces. However, the converse not need to be true. To see this, we consider \cite[Exam.2.3]{AT}.
\begin{example}
Take a field $\mathcal{M}$ consisting of countable or co-countable subsets of the index set of a net $(x_\alpha)_{\alpha\in A}$. Then every subnet of $(x_\alpha)_{\alpha\in A}$ with the countable index is $\mu$-statistically $\tau$-bounded. But $(x_\alpha)_{\alpha\in A}$ might have a subnet with a countable index that is not topologically bounded.
\end{example}

\begin{remark} It follows from \cite[Thm.2.19]{AB} every order bounded set is topologically bounded. Thus, every ordered bounded net is $\mu$-statistically $\tau$-bounded in locally solid Riesz spaces.
\end{remark}

\begin{theorem}
Every $\mu$-statistically $\tau$-convergent net is $\mu$-statistically $\tau$-bounded.
\end{theorem}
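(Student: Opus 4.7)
The plan is to fix an arbitrary zero $\tau$-neighborhood $U$ and produce a scalar $\lambda>0$ together with an index $\alpha_U\in A$ witnessing Definition~\ref{bounded def}; the scalar $\lambda$ will depend on $U$, consistent with the remark above that every order- (hence topologically) bounded net is $\mu$-statistically $\tau$-bounded. The algebraic identity driving the argument is the splitting
$$
\lambda x_\alpha \;=\; \lambda(x_\alpha-x)+\lambda x,
$$
so $\lambda x_\alpha$ will lie in $U$ whenever both summands lie in a solid ``half''-neighborhood $W$ satisfying $W+W\subseteq U$.

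First I would invoke property (c) of the solid base to pick $W\in\mathcal{N}_{sol}$ with $W+W\subseteq U$. Since $W$ is absorbing (property (b)), there is $\lambda_0>0$ with $\lambda_0 x\in W$; setting $\lambda:=\min(\lambda_0,1)$ and using balancedness of $W$ (property (a)) with $|\lambda/\lambda_0|\leq 1$, I still have $\lambda x\in W$. This handles the constant summand, and the additional choice $\lambda\leq 1$ keeps balancedness available for the variable summand in the next step.

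Next I would apply the hypothesis $x_\alpha \stc x$ directly at the neighborhood $W$: there exists $\alpha_W\in A$ with $\mu\bigl(\{\alpha_W\leq\alpha\in A: x_\alpha-x\notin W\}\bigr)=0$. Because $|\lambda|\leq 1$ and $W$ is balanced, $x_\alpha-x\in W$ forces $\lambda(x_\alpha-x)\in W$, and then
$$
\lambda x_\alpha=\lambda(x_\alpha-x)+\lambda x\in W+W\subseteq U.
$$
Consequently,
$$
\{\alpha_W\leq\alpha\in A : \lambda x_\alpha\notin U\}\;\subseteq\;\{\alpha_W\leq\alpha\in A : x_\alpha-x\notin W\},
$$
and the monotonicity clause built into the definition of a directed set measure (namely, $\mu(C)=0$ whenever $C\subseteq B$ with $\mu(B)=0$) forces the left-hand set to have $\mu$-measure zero as well. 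Taking $\alpha_U:=\alpha_W$ then verifies the defining condition.

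The main obstacle is bookkeeping rather than a deep structural point: one must choose $\lambda$ small enough so that $\lambda x\in W$ (by absorbency) and simultaneously so that $|\lambda|\leq 1$ (so that balancedness can be used to push the error term $x_\alpha-x$ through the scalar), and one must remember to invoke the monotonicity clause of $\mu$ rather than finite additivity when passing to the smaller exceptional set. No Riesz-space-specific result beyond the base properties (a)--(c) of $\mathcal{N}_{sol}$ appears to be needed.
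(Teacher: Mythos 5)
Your argument is essentially the paper's own proof: the same decomposition $\lambda x_\alpha=\lambda(x_\alpha-x)+\lambda x$, the same use of the absorbing and balanced properties of a solid zero neighborhood to handle the two summands, and the same subset inclusion followed by the monotonicity clause of the directed set measure (the paper merely inserts one extra neighborhood, using $V+V\subseteq W\subseteq U$ instead of your $W+W\subseteq U$). The only divergence is at the very end: you let $\lambda$ depend on $U$, whereas Definition~\ref{bounded def} as literally written quantifies a single $\lambda$ working for every $U$; the paper attempts to supply this uniform $\lambda$ by taking the minimum of the scalars $\sigma_u$ over all zero $\tau$-neighborhoods $U$ --- a step that is itself dubious, since that infimum over an infinite family need not exist or be positive --- so your explicitly $U$-dependent choice is, if anything, the more honest rendering of the same argument.
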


\begin{proof}
Assume that a net $(x_\alpha)_{\alpha\in A}$ is $\mu$-statistically $\tau$-converges to $x$ in a locally solid Riesz space $(E,\tau)$. Fix a zero $\tau$-neighborhood $U$. Then there are $W,V\in \mathcal{N}_{sol}$ such that $V+V\subseteq W\subseteq U$. So, there is an index $\alpha_v$ such that $\mu\big(B_{\{\alpha_v,V\}}\{x_\alpha,x\}\big)=0$. By using the absorbing property of $V$, there is a scalar $\beta>0$ such that $\beta x\in V$. Now, choose a scalar $\sigma_u$ such that $|\sigma_u|\leq1$ and $|\sigma_u|\leq\beta$. Then it follows from $|\sigma_u x|\leq|\beta x|$ that $\sigma_u x\in V$. Moreover, $(x_\alpha-x)\in V$ implies $\sigma_u(x_\alpha-x)\in V$ because $V$ is balanced. Then we observe from
$$
\sigma_u x_\alpha=\sigma_u(x_\alpha-x)+\sigma_u x\in V+V\subset U
$$
that $\sigma_u x_\alpha\notin U$ implies $\sigma_u(x_\alpha-x)\notin V$ for $\alpha\in A$. Hence, we obtain 
\begin{eqnarray*}
\{\alpha\in A:\sigma_u x_\alpha\notin U\}&\subseteq& \{\alpha\in A:\sigma(x_\alpha-x)\notin V\}\\ &\subseteq& \{\alpha\in A:(x_\alpha-x)\notin V\}.
\end{eqnarray*}
It follows that $\mu\big(\{\alpha_v\leq\alpha\in A:\sigma_u x_\alpha\notin U\}\big)=0$. If we take the scalar $\lambda$ in Definition \ref{bounded def} as the minimum of $\{\sigma_u:U\ \text{is zero} \ \tau\text{-neighborhood}\}$ then we get the desired result.
\end{proof}

\begin{definition}
A net $(x_\alpha)_{\alpha\in A}$ in a locally solid Riesz space $(E,\tau)$ is called {\em $\mu$-statistically $\tau$-Cauchy} if the net $(x_\alpha-x_{\alpha'})_{(\alpha,\alpha')\in A\times A}$ is $\mu$-statistically $\tau$-convergent to zero.
\end{definition}

\begin{question}
Is $\mu$-statistically $\tau$-Cauchy net $\mu$-statistically $\tau$-bounded?
\end{question}

\begin{theorem}
Every $\mu$-statistically $\tau$-convergent net is a $\mu$-statistically $\tau$-Cauchy.
\end{theorem}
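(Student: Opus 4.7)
The plan is to start from the standard ``convergent implies Cauchy'' identity $x_\alpha-x_{\alpha'}=(x_\alpha-x)-(x_{\alpha'}-x)$ and push it through a single careful choice of solid zero neighborhood with the $V+V\subseteq U$ property, exactly the device already used in parts $(ii)$ and $(iii)$ of Theorem 3.2. Fix an arbitrary zero $\tau$-neighborhood $U$. Using the base $\mathcal{N}_{sol}$, I choose $W\in\mathcal{N}_{sol}$ with $W\subseteq U$ and then $V\in\mathcal{N}_{sol}$ with $V+V\subseteq W$; because $V$ is balanced, $-V=V$, so any pair $(u,v)\in V\times V$ gives $u-v\in V+V\subseteq W\subseteq U$.

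Next, applying the hypothesis $x_\alpha\stc x$ to the solid zero neighborhood $V$, I obtain an index $\alpha_v\in A$ for which the set $N:=\{\alpha_v\leq\alpha\in A:(x_\alpha-x)\notin V\}$ satisfies $\mu(N)=0$. The key combinatorial step is the contrapositive of the splitting above: whenever $(\alpha,\alpha')\in A\times A$ satisfies $(\alpha_v,\alpha_v)\leq(\alpha,\alpha')$ coordinatewise and $x_\alpha-x_{\alpha'}\notin U$, at least one of $(x_\alpha-x)\notin V$ or $(x_{\alpha'}-x)\notin V$ must hold. Taking $(\alpha_v,\alpha_v)$ as the index in the product directed set $A\times A$, this yields the set inclusion
$$
\{(\alpha_v,\alpha_v)\leq(\alpha,\alpha')\in A\times A:x_\alpha-x_{\alpha'}\notin U\}\subseteq (N\times A)\cup(A\times N).
$$

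The remaining step is the measure-theoretic bookkeeping on the product directed set $A\times A$: equipped with the natural directed set measure (the product of $\mu$ with itself on the interval field generated by rectangles of interval-field members), any cylinder of the form $N\times A$ or $A\times N$ with $\mu(N)=0$ is itself null, and monotonicity together with finite subadditivity of a directed set measure then forces the displayed ``bad'' set to have measure zero. Since $U$ was arbitrary, this is precisely the definition of $(x_\alpha-x_{\alpha'})_{(\alpha,\alpha')\in A\times A}\stc 0$, i.e.\ of $(x_\alpha)_{\alpha\in A}$ being $\mu$-statistically $\tau$-Cauchy. The main obstacle I expect is this last bookkeeping point: the excerpt does not spell out the directed set measure used on the product $A\times A$, so one must first fix the convention that $\mu$ on $A$ lifts to a measure on $A\times A$ under which $\mu$-null cylinders remain null; once that convention is in place the rest of the argument is a direct mimic of the neighborhood manipulations already carried out in Theorem 3.2.
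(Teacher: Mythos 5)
Your proposal follows essentially the same route as the paper: the decomposition $x_\alpha-x_{\alpha'}=(x_\alpha-x)+(x-x_{\alpha'})$ together with solid zero neighborhoods $V+V\subseteq W\subseteq U$ and the contrapositive of the splitting. In fact you are more careful than the paper, which never specifies which directed set measure is being used on $A\times A$ and even writes its final null set as a subset of $A$ rather than of $A\times A$; your explicit inclusion of the bad set into $(N\times A)\cup(A\times N)$ and the product-measure convention for null cylinders address a bookkeeping gap that the paper's own proof simply glosses over.
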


\begin{proof}
Suppose that $(x_\alpha)_{\alpha\in A}$ is $\mu$-statistically $\tau$-convergent to $x$ in a locally solid Riesz space $(E,\tau)$. Then, for any zero $\tau$-neighborhood $U$ with zero $\tau$-neighborhood solid sets $V+V\subseteq W\subseteq U$, there exists an index $\alpha_v$ such that $\mu\big(B_{\{\alpha_v,V\}}\{x_\alpha,x\}\big)=0$. Then we have
$$
x_\alpha-x_{\alpha'}=(x_\alpha-x)+(x-x_{\alpha'})\in V+V\subseteq U
$$
for some $\alpha,\alpha'\in A$. Thus, one can see that $(x_\alpha-x_{\alpha'})\notin U$ implies $(x_\alpha-x)\notin V$ for some $\alpha$. Thus, we obtain
$$
\mu\big(\{\alpha_v\leq\alpha\in A:(x_\alpha-x_{\alpha'})\notin U\}\big)=0.
$$
Therefore, $(x_\alpha-x_{\alpha'})_{(\alpha,\alpha')\in A\times A}\stc 0$. It proves that $(x_\alpha)_{\alpha\in A}$ is $\mu$-statistically $\tau$-Cauchy net.
\end{proof}

Recall that a linear topology $\tau$ on a Riesz space $E$ is locally solid if and only if it is generated by a family $\{\rho_j\}_{j\in J}$ of Riesz pseudonorms (cf. \cite[Thm.2.28]{AB}). It is well known that if a subset $A$ of $E$ is topological bounded then $\rho_j(A)$ is bounded in $\mathbb{R}$ for each $j\in J$.
\begin{remark}\label{remarkkk} If a net $(x_\alpha)_{\alpha\in A}$ is $\mu$-statistically $\tau$-bounded in a locally solid vector lattice with a family of a Riesz pseudonorms $\{\rho_j\}_{j\in J}$ then $\rho_j(x_\alpha)$ is not need to be bounded in $\mathbb{R}$ for all $j\in J$.
\end{remark}

The converse of Remark \ref{remarkkk} is also not need to be hold. To see this, we consider the following example.
\begin{example}
Let $(E,\tau)$ be a locally solid vector lattice with a family of a Riesz pseudonorms $\{\rho_j\}_{j\in J}$. Define $\hat{\rho_j}:=\frac{\rho_j}{1+\rho_j}$ for each $j\in J$. Then it can be seen that $\hat{\rho_j}$ is also a Riesz pseudonorm on $E$ and the topology $\tau$ is generated by the family of $\{\hat{\rho_j}\}_{j\in J}$. Moreover, $\hat{\rho_j}(x_\alpha)\leq 1$ for all $\alpha\in A$. It means that $(x_\alpha)_{\alpha\in A}$ is bounded in $\mathbb{R}$ for every net $(x_\alpha)_{\alpha\in A}$ in $E$. However, we might have a net in $E$ that is not $\mu$-statistically $\tau$-bounded.
\end{example}

\begin{proposition}\label{normality of st conv}
Let $(x_\alpha)_{\alpha\in A}$, $(y_\alpha)_{\alpha\in A}$ and $(z_\alpha)_{\alpha\in A})$ nets in a locally solid Riesz space $(E,\tau)$ with $x_\alpha\leq y_\alpha\leq z_\alpha$ for all $\alpha\in A$. Then $x_\alpha\stc w$ and $x_\alpha\stc w$ implies $y_\alpha\stc w$. 
\end{proposition}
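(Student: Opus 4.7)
The statement is the sandwich/squeeze principle for $\mu$-statistical $\tau$-convergence, with the obvious reading that the second hypothesis should be $z_\alpha \stc w$ (otherwise the inequality $x_\alpha \leq y_\alpha \leq z_\alpha$ plays no role). My plan is to reduce the proposition to showing $y_\alpha - x_\alpha \stc 0$, and then add back $x_\alpha$ using Theorem \ref{basic remarkasic properties of st convergence}(ii).

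First, I would combine the two convergences via the linearity already established. From $x_\alpha \stc w$ and $z_\alpha \stc w$, Theorem \ref{basic remarkasic properties of st convergence}(iii) gives $-x_\alpha \stc -w$, and then Theorem \ref{basic remarkasic properties of st convergence}(ii) yields $z_\alpha - x_\alpha \stc 0$. This packaging lets me work with a single net converging to $0$ rather than juggling the two hypotheses separately.

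Next, the core step is a solidness argument. From $x_\alpha \leq y_\alpha \leq z_\alpha$ one gets $0 \leq y_\alpha - x_\alpha \leq z_\alpha - x_\alpha$, so $|y_\alpha - x_\alpha| \leq |z_\alpha - x_\alpha|$. Fix an arbitrary zero $\tau$-neighborhood $U$ and choose a solid $V \in \mathcal{N}_{sol}$ with $V \subseteq U$. Because $V$ is solid, whenever $z_\alpha - x_\alpha \in V$ we have $|z_\alpha - x_\alpha| \in V$ and hence $y_\alpha - x_\alpha \in V \subseteq U$. This gives the set inclusion
$$
\{\alpha_0 \leq \alpha \in A : (y_\alpha - x_\alpha) \notin U\} \subseteq \{\alpha_0 \leq \alpha \in A : (z_\alpha - x_\alpha) \notin V\},
$$
where $\alpha_0$ is the index furnished by $z_\alpha - x_\alpha \stc 0$ applied to $V$. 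By the monotonicity property of the directed set measure (the third condition in the definition of $\mu$), the left-hand set also has measure zero. Thus $y_\alpha - x_\alpha \stc 0$.

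Finally, writing $y_\alpha = (y_\alpha - x_\alpha) + x_\alpha$, Theorem \ref{basic remarkasic properties of st convergence}(ii) gives $y_\alpha \stc 0 + w = w$, as desired. The only subtle point, which I would not expect to be a real obstacle but which deserves a sentence, is the implicit measurability of the sets involved: the proof uses that $\{\alpha_0 \leq \alpha : (y_\alpha - x_\alpha) \notin U\}$ lies in the interval field $\mathcal{M}$, which is the standing assumption throughout the paper whenever $\mu$-statistical convergence is discussed, so that monotonicity of $\mu$ can be invoked to transfer the zero measure across the set inclusion.
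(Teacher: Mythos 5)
Your proof is correct, and it takes a genuinely different route from the paper's. You first use the already-established linearity (Theorem \ref{basic remarkasic properties of st convergence}(ii),(iii)) to form the single net $z_\alpha-x_\alpha\stc 0$, then exploit solidness via the domination $0\le y_\alpha-x_\alpha\le z_\alpha-x_\alpha$ to get a one-sided set inclusion of ``bad'' sets, and finally add $x_\alpha$ back. The paper instead argues directly at the level of the original nets: it picks $V\in\mathcal{N}_{sol}$ with $V+V\subseteq U$, uses the inequality $|y_\alpha-w|\le|x_\alpha-w|+|z_\alpha-w|$, and concludes that the bad set for $y_\alpha$ is covered by the union of the two bad sets for $x_\alpha$ and $z_\alpha$. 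Your reduction buys two things: the solidness argument is isolated into a single clean domination step needing only one neighborhood $V\subseteq U$ (no $V+V\subseteq U$ splitting), and you never have to handle a union of two null sets at this stage, since that bookkeeping is already absorbed into the cited Theorem \ref{basic remarkasic properties of st convergence}(ii). The cost is that your argument leans on the vector-space results as black boxes, whereas the paper's is self-contained. You were also right to read the second hypothesis as $z_\alpha\stc w$ (the statement's repetition of $x_\alpha\stc w$ is a typo; the paper's own proof uses $z_\alpha\stc w$), and your remark about measurability of the relevant sets matches the paper's standing convention.
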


\begin{proof}
Let $U$ be an arbitrary zero $\tau$-neighborhood in $E$ with $V\in \mathcal{N}_{sol}$ such that $V+V\subseteq U$. Since $x_\alpha\stc w$ and $z_\alpha\stc w$, there exist $\alpha_v$ such that
$$
\mu\big(B_{\{\alpha_v,V\}}\{x_\alpha,w\}\big)=\mu\big(B_{\{\alpha_v,V\}}\{z_\alpha,w\}\big)=0.
$$
On the other hand, it follows from $x_\alpha\leq y_\alpha\leq z_\alpha$ for all $\alpha\in A$ that
$$
|y_\alpha-w|\leq|x_\alpha-w|+|z_\alpha-w|\in V+V\subseteq U
$$
holds for some $\alpha\in A$. So, $(y_\alpha-w)\notin U$ implies that $(x_\alpha-w)\notin V$ or $(z_\alpha-w)\notin V$ for $\alpha\in A$. Therefore, one can get $\mu\big(\{\alpha_0\leq\alpha:(y_\alpha-w)\notin V\}\big)=0$ for both cases. Hence, $y_\alpha\stc w$. 
\end{proof}

\begin{theorem}
Let $C$ be a projection band and $x_\alpha\stc x$ in a locally solid Riesz space $(E,\tau)$. Then $P_C(x_n)\stc P_C(x)$ for the corresponding order projection $P_C$ of $C$. 
\end{theorem}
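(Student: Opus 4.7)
The plan is to exploit the fundamental contraction property of a band projection, namely that $P_C$ is a linear operator satisfying $|P_C(y)|\le|y|$ for every $y\in E$, and to pull the $\mu$-statistical $\tau$-convergence through this inequality using the solidness of a base of zero $\tau$-neighborhoods. The argument is essentially a transfer of the classical ``continuity of band projections'' argument to the measure-theoretic setting, and the pattern should closely mirror the proofs of parts $(ii)$ and $(iii)$ of Theorem~\ref{basic remarkasic properties of st convergence}.

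First, I would fix an arbitrary zero $\tau$-neighborhood $U$ and choose $W\in\mathcal{N}_{sol}$ with $W\subseteq U$, using the definition of a locally solid Riesz space. Next, applying the hypothesis $x_\alpha\stc x$ to the solid neighborhood $W$, I obtain an index $\alpha_w\in A$ such that
$$
\mu\big(B_{\{\alpha_w,W\}}\{x_\alpha,x\}\big)=\mu\big(\{\alpha_w\leq\alpha\in A:(x_\alpha-x)\notin W\}\big)=0.
$$
Then, by linearity of $P_C$, we have $P_C(x_\alpha)-P_C(x)=P_C(x_\alpha-x)$, and by the standard fact that band projections satisfy $|P_C(y)|\leq|y|$ for every $y\in E$, we obtain $|P_C(x_\alpha)-P_C(x)|\leq|x_\alpha-x|$.

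The decisive step is to combine this with solidness: whenever $(x_\alpha-x)\in W$, solidness of $W$ forces $P_C(x_\alpha)-P_C(x)\in W\subseteq U$. Taking the contrapositive yields the set-inclusion
$$
\{\alpha_w\leq\alpha\in A:P_C(x_\alpha)-P_C(x)\notin U\}\subseteq\{\alpha_w\leq\alpha\in A:(x_\alpha-x)\notin W\}.
$$
Since the right-hand set has $\mu$-measure zero, the monotonicity clause in the definition of a directed set measure (namely that $\mu(C)=0$ whenever $C\subseteq B$ and $\mu(B)=0$) forces the left-hand set to have $\mu$-measure zero as well. As $U$ was arbitrary, this gives $P_C(x_\alpha)\stc P_C(x)$.

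I do not foresee a real obstacle here; the argument is structurally routine once the contraction inequality $|P_C(y)|\leq|y|$ is in hand. The only potential subtlety is being careful that the index $\alpha_w$ selected for the bad set of $(x_\alpha-x)$ in $W$ can legitimately be reused as the required index for $P_C(x_\alpha)-P_C(x)$ in $U$; this is immediate from the set-inclusion displayed above, so no further manipulation (e.g., passing through an intermediate $V$ with $V+V\subseteq W$, as in the additivity and scalar-multiplication proofs) is needed.
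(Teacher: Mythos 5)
Your proposal is correct and follows essentially the same route as the paper: both rest on the contraction inequality $\lvert P_C(x_\alpha)-P_C(x)\rvert=P_C(\lvert x_\alpha-x\rvert)\leq\lvert x_\alpha-x\rvert$, use solidness of the neighborhood to convert this into a set inclusion of the ``bad'' index sets, and conclude by monotonicity of $\mu$ on null sets. Your version is in fact slightly more careful than the paper's, which applies the solidness argument directly to an arbitrary $U$ rather than first passing to a solid $W\subseteq U$.
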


\begin{proof}
Consider a fixed zero $\tau$-neighborhood $U$. Then there is an index $\alpha_u$ such that $\mu\big(B_{\{\alpha_u,U\}}\{x_\alpha,x\}\big)=0$ because of $x_\alpha\stc x$. On the other hand, it follows from \cite[Thm.24.5 and Thm.24.6]{LZ} that $P_C$ is an order continuous lattice homomorphism and $0\leq P_C\leq I$. Also, by considering \cite[Thm.2.14]{ABPO}, we observe that
$$
\lvert P_C(x_\alpha)-P_C(x)\rvert=P_C(\lvert x_\alpha-x\rvert)\leq \lvert x_\alpha-x\rvert.
$$ 
holds for all $\alpha\in A$. Thus, $(P_C(x_\alpha)-P_C(x))\notin U$ implies $(x_\alpha-x)\notin U$ for all $\alpha\geq \alpha_u$. Thus, it follows that
$$
\mu\big(\{\alpha_u\leq\alpha\in A:(P_C(x_\alpha)-P_C(x))\notin U\}\big)=0.
$$
Therefore, $P_C(x_n)\xrightarrow{st-u_\tau} P_C(x)$ in $E$.
\end{proof}

\section{The $\mu$-statistically continuity}

\begin{definition}
An operator $T$ between locally solid Riesz spaces $(E,\tau)$ and $(F,\tau')$ is called {\em $\mu$-statistically topological continuous operator} whenever $x_\alpha\stc x$ in $E$ implies $T(x_\alpha)\stc T(x)$ in $F$.
\end{definition}

We show that the lattice operators are $\mu$-statistically topological continuous in the following sense.
\begin{theorem}\label{lattice operators}
Let $(x_\alpha)_{\alpha\in A}$ and $(y_\alpha)_{\alpha\in A}$ be two nets in a locally solid Riesz space $(E,\tau)$. If $x_\alpha\stc x$ and $y_\alpha\stc y$ then:
\begin{enumerate}
\item[(i)] $x_\alpha\wedge y_\alpha\stc x\wedge y$;
\item[(ii)] $x_\alpha\vee y_\alpha\stc x\vee y$;
\item[(iii)] $x^+_\alpha\stc x^+$;
\item[(iv)] $x^-_\alpha\stc x^-$;
\item[(v)] $|x_\alpha|\stc |x|$.
\end{enumerate}
\end{theorem}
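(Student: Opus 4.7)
The plan is to exploit the standard pointwise Birkhoff inequalities, valid in every Riesz space,
\[
|x_\alpha\vee y_\alpha - x\vee y|\le|x_\alpha-x|+|y_\alpha-y|,\qquad
|x_\alpha\wedge y_\alpha - x\wedge y|\le|x_\alpha-x|+|y_\alpha-y|,
\]
together with the single-variable bounds $|x_\alpha^+-x^+|$, $|x_\alpha^--x^-|$, $\bigl||x_\alpha|-|x|\bigr|\le|x_\alpha-x|$. Solidness of the base $\mathcal{N}_{sol}$ will convert these pointwise estimates into set-theoretic inclusions of "bad" index sets, and the measure-theoretic content reduces to the finite subadditivity trick already used in the proof of Theorem \ref{basic remarkasic properties of st convergence}(ii).

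For part (i), I would fix a zero $\tau$-neighborhood $U$, choose $W\in\mathcal{N}_{sol}$ with $W\subseteq U$, and then $V\in\mathcal{N}_{sol}$ with $V+V\subseteq W$. The hypotheses furnish indices $\alpha_1,\alpha_2$ with $\mu\bigl(B_{\{\alpha_1,V\}}\{x_\alpha,x\}\bigr)=\mu\bigl(B_{\{\alpha_2,V\}}\{y_\alpha,y\}\bigr)=0$, and, since $A$ is directed, a common upper bound $\alpha_0\ge\alpha_1,\alpha_2$. For every $\alpha\ge\alpha_0$ with both $(x_\alpha-x)\in V$ and $(y_\alpha-y)\in V$, the sum $|x_\alpha-x|+|y_\alpha-y|$ lies in $V+V\subseteq W$; combined with the Birkhoff $\wedge$-inequality and solidness of $W$, this forces $x_\alpha\wedge y_\alpha - x\wedge y\in W\subseteq U$. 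Contrapositively,
\[
\{\alpha_0\le\alpha\in A:x_\alpha\wedge y_\alpha - x\wedge y\notin U\}\subseteq B_{\{\alpha_0,V\}}\{x_\alpha,x\}\cup B_{\{\alpha_0,V\}}\{y_\alpha,y\},
\]
and finite subadditivity of $\mu$ (which follows from finite additivity together with the monotonicity built into the definition of a directed set measure) makes the left-hand side a $\mu$-null set. Part (ii) then follows either by the parallel $\vee$-version of the same inequality, or by invoking Theorem \ref{basic remarkasic properties of st convergence}(iii) through the identity $x\vee y=-\bigl((-x)\wedge(-y)\bigr)$.

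Parts (iii), (iv) and (v) are strictly simpler single-variable specializations: only one net is involved, so no $V+V$-decomposition is necessary. Given any $W\in\mathcal{N}_{sol}$ with $W\subseteq U$, solidness of $W$ and the inequality $|f(x_\alpha)-f(x)|\le|x_\alpha-x|$ (for $f\in\{(\cdot)^+,(\cdot)^-,|\cdot|\}$) yield the inclusion $\{\alpha_u\le\alpha:f(x_\alpha)-f(x)\notin U\}\subseteq B_{\{\alpha_u,W\}}\{x_\alpha,x\}$, whose $\mu$-measure vanishes by hypothesis. Alternatively, these three items can be read off (i) and (ii) by taking $y_\alpha\equiv 0$ or $y_\alpha=-x_\alpha$ and combining with Theorem \ref{basic remarkasic properties of st convergence}(iii). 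I do not anticipate a genuine obstacle: the only delicate point is that $\mu$ is merely finitely additive, so countable operations are unavailable, but the "bad set" is always a union of two pieces, which is exactly what finite subadditivity handles; the overall argument is a structural repetition of Theorem \ref{basic remarkasic properties of st convergence}(ii) with the triangle inequality replaced by the Birkhoff lattice inequalities.
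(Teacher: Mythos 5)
Your proposal is correct and follows essentially the same route as the paper: the paper also proves the two-variable case via the Birkhoff inequality $|x_\alpha\vee y_\alpha-x\vee y|\le|x_\alpha-x|+|y_\alpha-y|$ combined with solidness of $V$, $V+V\subseteq U$, a common index from directedness, and the inclusion of the bad set in a union of two $\mu$-null sets, while deferring (i), (iii)--(v) to the standard lattice inequalities. Your handling of the union via finite subadditivity is in fact slightly cleaner than the paper's ``without loss of generality'' phrasing, but the argument is the same.
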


\begin{proof}
It is enough to show statement $(ii)$ because one can obtain the other statements from \cite[Thm.1.7]{ABPO}.
	
Fix an arbitrary zero $\tau$-neighborhood $U$ in $E$ with $V\in\mathcal{N}_{sol}$ such that $V+V\subseteq U$. Thus, there exists an index $\alpha_0$ such that $\mu\big(B_{\{\alpha_0,V\}}\{x_\alpha,x\}\big)=\mu\big(B_{\{\alpha_0,V\}}\{y_\alpha,y\}\big)=0$ because of $x_\alpha\stc x$ and $y_\alpha\stc y$. On the other hand, we observe from \cite[Thm.12.4]{LZ} that
\begin{eqnarray*}
\lvert x_\alpha\vee y_\alpha-x\vee y\rvert&\leq& \lvert x_\alpha\vee y_\alpha-y_\alpha\vee x\rvert+\lvert y_\alpha\vee x-x\vee y\rvert\\&\leq& \lvert x_\alpha-x\rvert+\lvert y_\alpha-y\rvert
\end{eqnarray*}
holds for each $\alpha\in A$. So, $(x_\alpha\vee y_\alpha-x\vee y)\notin U$ implies that $(x_\alpha-x)\notin V$ or $(y_\alpha-y)\notin V$. Therefore, we get $\mu\big(\{\alpha_0\leq\alpha\in A:(x_\alpha\vee y_\alpha-x\vee y)\notin U\}\big)=0$ for two cases, and so, we get the desired result.  
\end{proof}

The positive cone of a Riesz space is denoted by $E_+:=\{x\in E:0\leq x\}$. Also, it follows from Theorem \ref{basic remarkasic properties of st convergence} and Theorem \ref{lattice operators} that $E_+$ is closed under the $\mu$-statistically $\tau$-convergence in Hausdorff locally solid Riesz spaces.

\begin{proposition}\label{motone and st implies order}
Every monotone $\mu$-statistically $\tau$-convergent net in Hausdorff locally solid Riesz spaces is order convergent.
\end{proposition}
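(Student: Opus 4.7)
The plan is to reduce to the increasing case and then upgrade the $\mu$-statistical $\tau$-limit $x$ to the order supremum of the net, after which order convergence is immediate. Without loss of generality we may assume $(x_\alpha)_{\alpha\in A}$ is increasing, since the decreasing case is obtained by replacing $x_\alpha$ with $-x_\alpha$ via Theorem \ref{basic remarkasic properties of st convergence}(iii).

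First I would verify that $x$ is an upper bound of the net. Fix $\beta\in A$. Monotonicity gives $x_\alpha\geq x_\beta$, and hence $(x_\beta-x_\alpha)^+=0$, for every $\alpha\geq\beta$, so the auxiliary net $\bigl((x_\beta-x_\alpha)^+\bigr)_{\alpha\in A}$ is identically zero on the tail. Choosing the threshold index $\alpha_u=\beta$ in the definition of $\stc$ makes the measure-zero requirement trivial, so $(x_\beta-x_\alpha)^+\stc 0$. On the other hand, the linearity parts (ii)--(iii) of Theorem \ref{basic remarkasic properties of st convergence} applied to the hypothesis $x_\alpha\stc x$ give $x_\beta-x_\alpha\stc x_\beta-x$, and then Theorem \ref{lattice operators}(iii) yields $(x_\beta-x_\alpha)^+\stc(x_\beta-x)^+$. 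Since $\tau$ is Hausdorff, uniqueness of $\mu$-statistical $\tau$-limits, Theorem \ref{basic remarkasic properties of st convergence}(i), forces $(x_\beta-x)^+=0$, i.e., $x_\beta\leq x$.

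Next I would check that $x$ is the least upper bound. Let $y\in E$ satisfy $x_\alpha\leq y$ for every $\alpha\in A$. Then $(x_\alpha-y)^+=0$ for all $\alpha$, so $(x_\alpha-y)^+\stc 0$ trivially; by the same linearity-plus-lattice-continuity argument, also $(x_\alpha-y)^+\stc(x-y)^+$. Hausdorff uniqueness then forces $(x-y)^+=0$, i.e., $x\leq y$. Combined with the preceding paragraph this shows $x_\alpha\uparrow x$ in the order sense. Finally, setting $y_\alpha:=x-x_\alpha$, we have $y_\alpha\downarrow 0$ and $|x_\alpha-x|=y_\alpha$ for every $\alpha$, so $(x_\alpha)$ is order convergent to $x$ by the definition of order convergence.

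The only step requiring genuine care is the first one, where a non-topological hypothesis must be turned into an honest order inequality. The trick that unlocks it is that the positive part of the monotone gap $x_\beta-x_\alpha$ is literally zero on the tail $\{\alpha\geq\beta\}$, so the measure-zero condition in the definition of $\stc$ is fulfilled tautologically; Hausdorff uniqueness of $\stc$-limits then matches this null limit with $(x_\beta-x)^+$, produced by the $\stc$-continuity of the positive-part operation, and this forces $x_\beta\leq x$.
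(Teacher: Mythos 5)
Your proof is correct and follows essentially the same route as the paper: identify the $\mu$-statistical $\tau$-limit $x$ as the order supremum of the increasing net by showing the positive cone is closed under $\stc$ (which you re-derive via Theorem \ref{lattice operators}(iii) and Hausdorff uniqueness, exactly the ingredients the paper cites in the remark preceding the proposition), and then read off order convergence from $x-x_\alpha\downarrow 0$. If anything, your version is slightly more careful than the paper's, which establishes $x\geq x_{\alpha_u}$ only for the single index $\alpha_u$ attached to one neighborhood, whereas you run the tail argument for every $\beta\in A$ and also handle the decreasing case explicitly.
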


\begin{proof}
Suppose that a net $(x_\alpha)_{\alpha\in A}$ is increasing and $x_\alpha\stc x$ in a Hausdorff locally solid Riesz space $(E,\tau)$. We show that $x_\alpha\uparrow x$. Take an arbitrary zero $\tau$-neighborhood $U$. Then there exists an index $\alpha_0$ such that $\mu\big(B_{\{\alpha_u,U\}}\{x_\alpha,x\}\big)=0$. So, we obtain
$$
\mu\big(\{\alpha_u\leq\alpha\in A:(x_\alpha-x_{\alpha_u})-(x-x_{\alpha_u})\notin U\}\big)=0.
$$
Hence, $x_\alpha-x_{\alpha_u}\stc x-x_{\alpha_u}$. On the other hand, it follows from the increasing of $(x_\alpha)_{\alpha\in A}$ that $x_\alpha-x_{\alpha_u}\in E_+$ for all $\alpha\geq\alpha_u$. Thus, $x\geq x_{\alpha_u}$. It means that $x$ is an upper bound of $(x_\alpha)_{\alpha\in A}$. Take another upper bound $z$ of $(x_\alpha)_{\alpha\in A}$. Then we have 
$$
\mu\big(\{\alpha_u\leq\alpha\in A:(z-x_\alpha)-(z-x)\notin U\}\big)=0.
$$
So, we get $z\geq x$, i.e.,  $x_\alpha \uparrow x$ because of $z-x_\alpha\in E_+$ for all $\alpha\in A$.
\end{proof}

\begin{proposition}
If $x_\alpha\stc x$ and $y_\alpha\stc y$ in a Hausdorff locally solid Riesz space $(E,\tau)$ then $x_\alpha\ge y_\alpha$ for all $\alpha$ implies $x\ge y$.
\end{proposition}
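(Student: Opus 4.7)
The plan is to reduce the claim to the already-established fact that the positive cone $E_{+}$ is closed under $\mu$-statistical $\tau$-convergence in the Hausdorff setting. The natural candidate net is $z_\alpha := x_\alpha - y_\alpha$, which lies in $E_{+}$ for every $\alpha$ by the hypothesis $x_\alpha \geq y_\alpha$. If one can show $z_\alpha \stc x - y$, then the closedness of $E_{+}$ forces $x - y \in E_{+}$, i.e., $x \geq y$.

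First I would invoke parts $(ii)$ and $(iii)$ of Theorem \ref{basic remarkasic properties of st convergence}: from $x_\alpha \stc x$ and $y_\alpha \stc y$ together with $\mu$-statistical $\tau$-convergence of $-y_\alpha$ to $-y$ (the scalar $\lambda = -1$ case), the sum rule yields $x_\alpha - y_\alpha \stc x - y$. This step is purely formal and relies only on the algebraic stability of the convergence.

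Next, to extract the sign conclusion, I would apply Theorem \ref{lattice operators}$(iv)$ to the net $(z_\alpha)$: it gives $z_\alpha^{-} \stc (x-y)^{-}$. But since $z_\alpha \geq 0$ we have $z_\alpha^{-} = 0$ for every $\alpha$, and the constant zero net $\mu$-statistically $\tau$-converges to $0$ by Remark \ref{top con imp mu}. Invoking Hausdorffness via the uniqueness of $\mu$-$st_\tau$-limits in Theorem \ref{basic remarkasic properties of st convergence}$(i)$, I conclude $(x-y)^{-} = 0$, equivalently $x-y \in E_{+}$. Alternatively, one can cite directly the observation following Theorem \ref{lattice operators} that $E_{+}$ is closed under $\mu$-statistical $\tau$-convergence, which bundles exactly this argument.

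The only delicate point is ensuring that the Hausdorff hypothesis really is used; it enters precisely through the uniqueness of the $\mu$-$st_\tau$-limit needed to identify the limit of $(z_\alpha^{-})$ as both $(x-y)^{-}$ and $0$. Otherwise the argument is essentially a two-line chain: algebra to produce $z_\alpha \stc x-y$, then continuity of $(\cdot)^{-}$ to transfer the inequality to the limit. I do not anticipate a serious obstacle, since every ingredient has been proved in the preceding sections of the paper.
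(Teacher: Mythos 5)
Your proof is correct and follows essentially the same route as the paper: both reduce to showing $x_\alpha-y_\alpha\stc x-y$ via the linearity of the convergence and then conclude $x-y\in E_+$ from the closedness of the positive cone under $\mu$-statistical $\tau$-convergence. Your version is in fact slightly more careful, since you spell out (via $z_\alpha^-\stc(x-y)^-$ and uniqueness of limits in the Hausdorff setting) exactly where the closedness of $E_+$ and the Hausdorff hypothesis enter, whereas the paper asserts this step in one line.
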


\begin{proof}
Assume that $x_\alpha\stc x$, $y_\alpha\stc y$ and $y_\alpha\leq x_\alpha$ for all $\alpha$. Then, for any zero $\tau$-neighborhood $U$, there exists an index $\alpha_0$ such that $\mu\big(B_{\{\alpha_0,V\}}\{x_\alpha,x\}\big)=\mu\big(B_{\{\alpha_0,V\}}\{y_\alpha,y\}\big)=0$ holds. Thus, we have
$$
\mu\big(\{\alpha_u\leq\alpha\in A:(x_\alpha-y_\alpha)-(x-y)\notin V\}\big)=0.
$$
That is, $x_\alpha-y_\alpha\stc x-y$. So, we have $x-y\in E_+$ because of $x_\alpha-y_\alpha\in E_+$. Hence, we obtain the desired result.
\end{proof}

\begin{proposition}\label{riesz space}
The family of all $\mu$-statistically topological convergent nets $E_{\mu-st_\tau}$ is a Riesz space.
\end{proposition}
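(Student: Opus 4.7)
The plan is to view $E_{\mu-st_\tau}$ as a subset of the space of all $E$-valued nets on a fixed directed set $A$ (equipped with its measure $\mu$) under pointwise addition, scalar multiplication, and lattice operations, and then verify that it is a vector sublattice. Since the ambient space of all $E$-valued nets on $A$ is itself a Riesz space under pointwise operations (this being immediate from the fact that $E$ is a Riesz space), it suffices to check that $E_{\mu-st_\tau}$ is closed under these operations; the Riesz space axioms then descend automatically to the subspace.

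First I would note that the constantly zero net is $\mu$-statistically $\tau$-convergent to $0$, so $E_{\mu-st_\tau}$ is nonempty and contains the additive identity. For two nets $(x_\alpha), (y_\alpha) \in E_{\mu-st_\tau}$ with $\mu$-$st_\tau$-limits $x$ and $y$ respectively, and for any scalar $\lambda \in \mathbb{R}$, I would invoke parts $(ii)$ and $(iii)$ of Theorem \ref{basic remarkasic properties of st convergence} to obtain $(x_\alpha+y_\alpha) \stc (x+y)$ and $(\lambda x_\alpha) \stc (\lambda x)$. This shows that $E_{\mu-st_\tau}$ is a vector subspace.

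For the lattice structure, I would apply Theorem \ref{lattice operators} directly: parts $(i)$ and $(ii)$ give $(x_\alpha \wedge y_\alpha) \stc x \wedge y$ and $(x_\alpha \vee y_\alpha) \stc x \vee y$, which is precisely the closure of $E_{\mu-st_\tau}$ under the pointwise lattice operations. Together with the vector space closure above, this completes the verification that $E_{\mu-st_\tau}$ is a Riesz space under the inherited pointwise operations.

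There is no substantive obstacle; the proposition is a bookkeeping consequence of the two previous theorems. The only point requiring a sentence of care is making explicit the convention that the nets under consideration share a common index set $A$ (with its fixed measure $\mu$), since otherwise the pointwise operations on two nets with different directed domains are not canonically defined. Once this convention is stated, the proof reduces to citing Theorem \ref{basic remarkasic properties of st convergence}$(ii)$-$(iii)$ for the linear structure and Theorem \ref{lattice operators}$(i)$-$(ii)$ for the lattice structure.
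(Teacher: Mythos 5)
Your proposal is correct and follows essentially the same route as the paper: linearity from Theorem \ref{basic remarkasic properties of st convergence}$(ii)$--$(iii)$ and the lattice structure from Theorem \ref{lattice operators}. The only cosmetic difference is that you verify closure under $\vee$ and $\wedge$ directly, whereas the paper checks closure under the modulus $x\mapsto|x|$ and then invokes the standard characterization of Riesz subspaces; these are equivalent, and your explicit remark that all nets must share the common index set $A$ with its fixed measure $\mu$ is a worthwhile clarification.
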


\begin{proof}
It follows from Theorem \ref{basic remarkasic properties of st convergence}, $E_{\mu-st_\tau}$ is a vector space. Now, consider an arbitrary element $x:=(x_\alpha)_{\alpha_\in A}$ in $E_{\mu-st_\tau}$ such that $x\stmuc y$ for some $y\in E$. Thus, by applying Theorem \ref{lattice operators}, we get $|x|\stmuc |y|$. It means that $|x|\in E_{\mu-st_\tau}$. Therefore, by using \cite[Thm.1.3 and Thm.1.7]{AB}, one can obtain that $E_{\mu-st_\tau}$ is a Riesz subspace.
\end{proof}

\begin{theorem}
Every uniformly continuous operator between locally solid Riesz spaces is a $\mu$-statistically continuous operator.
\end{theorem}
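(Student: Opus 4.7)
The plan is to unwind the definitions of uniform continuity and $\mu$-statistically $\tau$-convergence, and then transport a measure-zero set across the implication provided by uniform continuity. Let $T:(E,\tau)\to(F,\tau')$ be uniformly continuous and suppose $x_\alpha\stc x$ in $E$; the goal is to show $T(x_\alpha)\stc T(x)$ in $F$.

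First, I would fix an arbitrary zero $\tau'$-neighborhood $U'$ in $F$. Uniform continuity supplies a zero $\tau$-neighborhood $U$ in $E$ with the property that whenever $y-z\in U$ one has $T(y)-T(z)\in U'$. Applied with $y=x_\alpha$ and $z=x$, this reads: $(x_\alpha-x)\in U$ implies $(T(x_\alpha)-T(x))\in U'$. Contrapositively,
$$
(T(x_\alpha)-T(x))\notin U'\ \Longrightarrow\ (x_\alpha-x)\notin U,
$$
for every $\alpha\in A$.

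Next, I would invoke $x_\alpha\stc x$ for the specific neighborhood $U$ to produce an index $\alpha_u\in A$ with $\mu\bigl(B_{\{\alpha_u,U\}}\{x_\alpha,x\}\bigr)=0$. The contrapositive above gives the set inclusion
$$
\{\alpha_u\leq\alpha\in A:(T(x_\alpha)-T(x))\notin U'\}\ \subseteq\ \{\alpha_u\leq\alpha\in A:(x_\alpha-x)\notin U\}= B_{\{\alpha_u,U\}}\{x_\alpha,x\}.
$$
Using the subset property of a directed set measure (namely, $\mu(C)=0$ whenever $C\subseteq B$ with $\mu(B)=0$), I conclude that $\mu\bigl(\{\alpha_u\leq\alpha\in A:(T(x_\alpha)-T(x))\notin U'\}\bigr)=0$. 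Since $U'$ was an arbitrary zero $\tau'$-neighborhood with its associated index $\alpha_u$, this gives $T(x_\alpha)\stc T(x)$.

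The main obstacle, if any, is purely conceptual: one must use the definition of uniform continuity appropriate to the translation-invariant uniformity of a linear topology, namely that zero $\tau'$-neighborhoods in $F$ are pulled back to zero $\tau$-neighborhoods in $E$ by the map $(y,z)\mapsto T(y)-T(z)$. With that in hand, the argument is essentially a one-line chase through a containment of sets, and the solidness structure of $(E,\tau)$ and $(F,\tau')$ plays no role beyond the setting in which $\mu$-statistical $\tau$-convergence was defined.
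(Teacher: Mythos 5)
Your proof is correct and follows essentially the same route as the paper's: pull back the target neighborhood via uniform continuity, take the contrapositive to get a containment of exceptional sets, and apply the null-subset property of the directed set measure. The only cosmetic difference is that the paper phrases uniform continuity as $T(V)\subseteq U$ together with linearity of $T$, while you phrase it directly on differences $T(y)-T(z)$; the argument is otherwise identical.
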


\begin{proof}
Suppose that $T:(E_\tau)\to(F,\tau')$ is a uniformly continuous operator and $(x_\alpha)_{\alpha\in A}$ $\mu$-statistically $\tau$-convergent to $x\in E$. Let $U$ be an arbitrary zero $\tau'$-neighborhood. Then there exists a zero $\tau$-neighborhood $V$ such that $T(v)\in U$ for every $v\in V$. Thus, $(x_\alpha-x)\in V$ implies $T(x_\alpha)-T(x)=T(x_\alpha-x)\in U$. It follows from $x_\alpha\stc x$ that there is an index $\alpha_v$ such that 
$$
\mu\big(\{\alpha_v\leq\alpha\in A:(x_\alpha-x)\notin V\}\big)=0.
$$
Then we observe that $(x_\alpha-x)\notin V$ whenever $T(x_\alpha)-T(x)\notin U$. Therefore, we obtain that
$$
\mu\big(\{\alpha_v\leq\alpha\in A:(T(x_\alpha)-T(x))\notin U\}\big)=0.
$$ 
As a result, $T(x_\alpha)\stc T(x)$, i.e., $T$ is $\mu$-statistically continuous.
\end{proof}

\end{document}